\documentclass[12pt]{article}

\usepackage{amsmath, amssymb,amscd,amsthm,amsfonts}
\usepackage{bm, mathrsfs,dsfont}
\usepackage{float}  % picture, command for [H]
\usepackage{tikz}
\usepackage{bbding}  % mail
\newtheorem{theorem}{Theorem}[section]

\newtheorem{defn}{Definition}[section]
\newtheorem{lemma}{Lemma}[section]

\newtheorem{exam}{Example}[section]
\newtheorem{rem}{Remark}[section]
\def\nd{\noindent}
\def\fX{\frak X}
\def\fD{\frak D}
\def\fP{\frak P}

\title{\bf Bounded Projective Functions and Hyperbolic Metrics with Isolated Singularities}
\author{Bo Li$^1$, Yu Feng$^2$, Long Li$^3$ and Bin Xu$^4$}

\begin{document}
\maketitle

\noindent{\small $^{1,2,4}$Wu Wen-Tsun Key Laboratory of Math, USTC, Chinese Academy of Science\\
School of Mathematical Sciences, University of Science and Technology of China.\\
No. 96 Jinzhai Road, Hefei, Anhui Province 230026 P. R. China. \\
$^3$ Fourier Institute, 100 Rue des Maths 38610 Gi\' eres, Grenoble, France.\\

\noindent{\small $^{1}$ ilozyb@mail.ustc.edu.cn \quad $^{2}$ fengyu0954@163.com\\
\quad $^3$Long.Li1@univ-grenoble-alpes.fr \quad $^4$ \Envelope bxu@ustc.edu.cn}
\vskip0.5cm

\paragraph{Abstract}
{\small
we establish a correspondence on a Riemann surface between hyperbolic metrics with isolated singularities  and bounded projective functions whose Schwarzian derivatives have at most double poles and whose monodromies lie in ${\rm PSU}(1,\,1)$. As an application, we construct explicitly a new class of hyperbolic metrics with countably many singularities on the unit disc.}
\paragraph{Key words}
{\small hyperbolic metric with singularities, projective function}

\paragraph{2010 MSC }
{\small Primary 51M10;\quad Secondary 35J61, 34M35}
\section{Introduction}
\paragraph{}
Let $X$ be a compact Riemann surface
and $D=\sum_{j=1}^n\,(\theta_j-1)\,P_j$ be a ${\Bbb R}$-divisor on $X$ such that $1\not=\theta_j\geq 0$ and $P_1,\cdots,P_n$ are $n$ distinct points on $X$.
We call $ds^2$ a {\it conformal metric representing $D$} if $ds^2$ is a smooth conformal metric on $X\backslash {\rm supp}\, D:=X\backslash \{P_1,\cdots, P_n\}$ and
in a neighborhood $U_j$ of $P_j$, $ds^2$ has form $e^{2u_j}\,|dz|^2$, where $z$
is a local complex coordinate defined in $U_j$
centered at $P_j$ and the smooth real valued function
\begin{equation*}
v_j:=
\begin{cases}
u_j-(\theta_j-1)\,\ln\,|z|\quad & \quad {\rm if}\ \theta_j>0\\

u_j+\ln\,|z|+\ln\,\big(-\ln\,|z|\big)\quad &\quad {\rm if}\ \theta_j=0
\end{cases}
\end{equation*}
on $U_j\backslash \{P_j\}$ extends to a continuous function on $U_j$.
We also say that $ds^2$ has a {\it cone singularity of angle $2\pi\theta_j$ at $P_j$} when $\theta_j>0$,
and it has a {\it cusp singularity at $P_j$} when $\theta_j=0$.
We may think of the above definition of cone/cusp singularity a differential geometric one.
There also exists also a unified complex analytical definition for both cone singularity and cusp one of a hyperbolic metric (Definition \ref{defn:cx}). We prove in Lemma \ref{lem:equiv} that these two definitions coincide with each other when we are considering hyperbolic metrics.
J. Nitsche \cite{Nits57} and M. Heins \cite[$\S$\,18]{Hei62} proved that {\it an isolated singularity of a  hyperbolic metric must be either a cone singularity or a cusp one}.
By the Gauss-Bonnet formula, if $ds^2$ is a conformal hyperbolic metric representing $D=\sum_{j=1}^n\,(\theta_j-1)\,P_j$ on $X$, then there holds
$$\chi(X)+\sum_{j=1}^n\,(\theta_j-1)<0,$$ where $\chi(X)$ is the Euler number of $X$.
M. Heins \cite{Hei62} studied the properties of S-K metrics and applied it to showing

\begin{theorem} {\rm (\cite[Chapter II]{Hei62})}
There exists a unique conformal hyperbolic metric representing an ${\Bbb R}$-divisor $D=\sum_{j=1}^n\,(\theta_j-1)\,P_j$ with $\theta_j\geq 0$ on a compact Riemann surface $X$ if and only if
$\chi(X)+\sum_{j=1}^n\,(\theta_j-1)<0$.
\end{theorem}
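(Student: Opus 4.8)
The plan is to establish necessity from the Gauss--Bonnet identity already displayed, and to obtain sufficiency and uniqueness by recasting the geometric problem as a semilinear elliptic equation of Liouville type. Necessity is immediate: if $ds^2=e^{2u}|dz|^2$ has curvature $-1$ and represents $D$, then the stated formula reads $\tfrac{1}{2\pi}\int_X K\,dA=\chi(X)+\sum_j(\theta_j-1)$, and since $K\equiv-1$ while the area is strictly positive, the right-hand side is negative. For the converse I would first fix a reference conformal metric $ds_0^2=e^{2u_0}|dz|^2$ that itself represents $D$, built by gluing the standard local hyperbolic cone model $\tfrac{4\theta_j^2|z|^{2\theta_j-2}}{(1-|z|^{2\theta_j})^2}|dz|^2$ (for $\theta_j>0$) or cusp model $\tfrac{|dz|^2}{|z|^2(\ln|z|)^2}$ (for $\theta_j=0$) near $P_j$ to a smooth metric on the remainder of $X$ via a partition of unity. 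Its curvature $K_0$ is then smooth and bounded on $X\setminus\mathrm{supp}\,D$, and Gauss--Bonnet gives $\int_X K_0\,dA_0=2\pi\big(\chi(X)+\sum_j(\theta_j-1)\big)$. Writing the sought metric as $ds^2=e^{2\phi}ds_0^2$ with $\phi$ a genuine, singularity-free function on $X$, the condition $K\equiv-1$ becomes
\[
\Delta_0\phi = K_0 + e^{2\phi}\qquad\text{on }X,
\]
where $\Delta_0$ is the Laplace--Beltrami operator of $ds_0^2$. Integrating against $dA_0$ yields $\int_X e^{2\phi}\,dA_0=-\int_X K_0\,dA_0$, which is positive exactly when $\chi(X)+\sum_j(\theta_j-1)<0$, so the hypothesis is the precise obstruction.

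To produce $\phi$ I would apply the direct method of the calculus of variations to
\[
J(\phi)=\tfrac12\int_X|\nabla_0\phi|^2\,dA_0+\tfrac12\int_X e^{2\phi}\,dA_0+\int_X K_0\,\phi\,dA_0,
\]
whose Euler--Lagrange equation is the one above. Splitting $\phi=\bar\phi+\phi_0$ into its $ds_0^2$-average and mean-zero part, the linear term contributes $2\pi\big(\chi(X)+\sum_j(\theta_j-1)\big)\bar\phi$; since this coefficient is negative, $J\to+\infty$ as $\bar\phi\to-\infty$, while Jensen's inequality forces $\tfrac12\int_X e^{2\phi}\,dA_0\to+\infty$ as $\bar\phi\to+\infty$, and the Dirichlet energy together with a Moser--Trudinger inequality controls $\phi_0$. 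Thus $J$ is coercive and weakly lower semicontinuous on $H^1(X,ds_0^2)$ and attains a minimizer; elliptic regularity away from $\mathrm{supp}\,D$ makes $\phi$ smooth, and a local analysis at each $P_j$ shows that $e^{2\phi}ds_0^2$ inherits the prescribed cone or cusp singularity, hence represents $D$. (Equivalently one may run Heins' original Perron-type construction, taking the upper envelope of the family of SK-metrics of curvature $\le-1$ dominated by a suitable barrier and invoking the Ahlfors--Schwarz lemma.)

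Uniqueness I would obtain from the maximum principle. If $e^{2u_1}|dz|^2$ and $e^{2u_2}|dz|^2$ are two hyperbolic metrics representing $D$, then $w=u_1-u_2$ satisfies $\Delta w=e^{2u_1}-e^{2u_2}$ on $X\setminus\mathrm{supp}\,D$, and because the two metrics carry the same singular model at each $P_j$, the difference $w$ extends continuously and boundedly across the singular points. Were $w$ to attain a positive maximum, then $\Delta w\le0$ there, whereas $e^{2u_1}-e^{2u_2}>0$ at that point, a contradiction; hence $w\le0$, and by symmetry $w\equiv0$.

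The main obstacle is the singular analysis underlying the existence step. Near a cone point the natural elliptic theory must be carried out in weighted Hölder or Sobolev spaces adapted to the conical weight $|z|^{2\theta_j-2}$, and near a cusp the surface $X\setminus\mathrm{supp}\,D$ is complete and non-compact, albeit of finite area, so the Moser--Trudinger and compactness input, together with the regularity and precise asymptotics of the minimizer at the ends, cannot simply be quoted from the smooth compact theory. The delicate point is controlling $\phi$ at the cusps so that the minimizer genuinely yields a cusp rather than a degenerate angle-zero cone; it is exactly here that Heins' maximum-principle and SK-metric method is technically cleaner, since barrier comparisons treat cones and cusps on the same footing.
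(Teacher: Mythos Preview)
The paper does not prove this theorem at all: Theorem~1.1 is quoted as a result of Heins \cite[Chapter~II]{Hei62}, with the historical remark that McOwen \cite{MO88} and Troyanov \cite{Tr91} later reproved the cone case by PDE methods. So there is no ``paper's own proof'' to compare against; the statement is background material.

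That said, your outline is a faithful sketch of the McOwen--Troyanov variational approach for the cone case, and you correctly flag that the cusp case is where the analysis becomes delicate. Two comments on the content. First, your uniqueness argument needs a little more care at the singular set: the maximum of $w=u_1-u_2$ could a priori be attained at some $P_j$, where the equation $\Delta w=e^{2u_1}-e^{2u_2}$ is not available and the right-hand side is not even bounded near a cusp; one typically handles this either by the strong maximum principle on $X\setminus\mathrm{supp}\,D$ combined with a removable-singularity observation, or (as in Heins) by the Ahlfors--Schwarz comparison of SK-metrics, which sidesteps the issue entirely. Second, for existence at cusps your variational scheme is set on a finite-area complete surface, and the honest execution of Moser--Trudinger and of the asymptotic regularity of the minimizer there is substantial; your closing paragraph is right that Heins' Perron/SK-metric method is the cleaner route, and indeed that is the proof the paper is citing.
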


\nd Actually the history of this problem goes back to Picard \cite{Pic1905}, who studied the hyperbolic metrics with cone singularities. After Heins' work \cite{Hei62}, both McOwen \cite{MO88} and Troyanov \cite{Tr91}, who were unaware of M. Heins' work \cite{Hei62} apparently,  proved the theorem for the case of $\theta_j>0$ by using different PDE methods. Moreover, the hyperbolic metrics on the Riemann sphere with three singularities were expressed explicitly in \cite{ASVV2010,KRS2011, Zh2014} by using the Gauss hypergeometric  functions and some refined properties of the metrics were also studied there.

In this manuscript, by using developing map \cite[$\S\,$3.4]{Thur97} and \cite[Sections 2-3]{CWWX15} and Complex Analysis,  we shall investigate more general hyperbolic metrics with isolated singularities on Riemann surfaces, which are {\it not} necessarily compact.
To this end, we need to prepare some notions at first. Let ${\frak X}$ be a Riemann surface and ${\frak D}=\sum_j\,(\theta_j-1)\,{\frak P}_j$ an ${\Bbb R}$-divisor on ${\frak X}$ such that
$\theta_j\geq 0$ and ${\frak P}_1,\,{\frak P}_2,\cdots$ are mutually distinct points on $\fX$, which form a closed and discrete subset of ${\frak X}$. We denote $\{{\frak P}_1,\,{\frak P}_2,\cdots\}$ by ${\rm supp}\, {\frak D}$, which is at most countable. We could give the definition  of {\it conformal metric representing ${\frak D}$ on ${\frak X}$} in the similar way as the first paragraph of this section.
 We call a multi-valued locally univalent meromorphic function
$f:S\to {\Bbb P}^1:={\Bbb C}\cup \{ \infty \} $ on a Riemann surface $S$ a {\it projective function} if the monodromy of $f$ lies in the group ${\rm PSL}(2,\,{\Bbb C})$ consisting of
all M{\" o}bius transformations.
We call a projective function $f$ on $\fX\backslash {\rm supp}\, \fD$ {\it compatible with} $\fD$ if and only if
the Schwarzian derivative $\{f,\,z\}=\Bigl(\frac{f''(z)}{f'(z)}\Bigr)' - \frac{1}{2}\Bigl(\frac{f''(z)}{f'(z)}\Bigr)^2$ of $f$ has the form of  $\frac{1-\theta_j^2}{2z^2}+\frac{b_j}{z}+h_j(z)$ near each $\fP_j$, where $z$ is a complex coordinate centered at $\fP_j$, $b_j$ is a constant, and $h_j(z)$ is holomorphic near $\fP_j$. We note that both the constant $b_j$ and the holomorphic function $h_j(z)$ depend on the choice of the complex coordinate $z$, but the principal singular term $\frac{1-\theta_j^2}{2z^2}$  of $\{f,\,z\}$ does not.

\begin{theorem}
\label{thm:conn}
There exists a conformal hyperbolic metric $ds^2$ representing an ${\Bbb R}$-divisor $\fD=\sum_{j}\,(\theta_j-1)\,\fP_j$ with $\theta_j\geq 0$ on a Riemann surface $\fX$ if and only if there exists a projective function
$f:\fX\backslash {\rm supp}\, \fD\to {\Bbb D}:=\{w\in {\Bbb C}:\,|w|<1\}$ such that $f$ is compatible with $\fD$ and
the monodromy of $f$ lies in the holomorphic automorphism group
$${\rm PSU}(1,\,1)=\left\{w\longmapsto\frac{aw+b}{\overline{b}w+\overline{a}}:\ a,\ b\in \mathbb{C},\ \vert a \vert^2-\vert b \vert^2=1\right\}$$
 of ${\Bbb D}$.
Moreover, $ds^2$ coincides with the pullback $f^* g_{\rm st}$ of the standard hyperbolic metric $g_{\rm st}:=\frac{4|dw|^2}{(1-|w|^2)^2}$ on ${\Bbb D}$ by $f$. We call $f$ a {\rm developing map} of the metric $ds^2$, which is uniquely determined up to a post-composition with an automorphism of ${\Bbb D}$.
\end{theorem}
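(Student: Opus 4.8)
The plan is to prove the two implications separately, using the Schwarzian derivative of a local developing map as the bridge between the metric and the projective function. The computational core is the classical identity that, whenever a conformal metric $ds^2=e^{2u}\,|dz|^2$ is realized as a pullback $f^{*}g_{\rm st}$, the relation $e^{2u}=\frac{4|f'|^2}{(1-|f|^2)^2}$ forces $\{f,\,z\}=2\bigl(u_{zz}-u_z^2\bigr)$. Because the right-hand side is built only from $u$ while the left-hand side is unchanged under post-composing $f$ with a M\"obius map, this single formula will let me move back and forth between $ds^2$ and $f$.

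Starting from a projective function $f:\fX\backslash{\rm supp}\,\fD\to{\Bbb D}$ compatible with $\fD$ and with monodromy in ${\rm PSU}(1,\,1)$, I would first note that, since $g_{\rm st}$ is invariant under its holomorphic isometry group ${\rm PSU}(1,\,1)$, any monodromy transformation $M$ satisfies $(M\circ f)^{*}g_{\rm st}=f^{*}g_{\rm st}$, so $f^{*}g_{\rm st}$ is single-valued and defines a smooth conformal metric on $\fX\backslash{\rm supp}\,\fD$; as a pullback of the curvature $-1$ metric $g_{\rm st}$ by a locally univalent holomorphic map it again has curvature $-1$. It then remains to analyze $f$ near each $\fP_j$: writing $\{f,\,z\}=2Q$ with $Q=\tfrac{1-\theta_j^2}{4z^2}+\cdots$ and passing to the companion linear equation $w''+Qw=0$, the indicial exponents are $\tfrac{1\pm\theta_j}{2}$, so a ratio of solutions gives $f\sim z^{\theta_j}$ when $\theta_j>0$ and $f\sim\log z$ when $\theta_j=0$, up to a M\"obius factor. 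Feeding these normal forms into $g_{\rm st}$ produces exactly a cone singularity of angle $2\pi\theta_j$ (respectively a cusp), which I would record through Definition \ref{defn:cx} and Lemma \ref{lem:equiv} to conclude that $f^{*}g_{\rm st}$ represents $\fD$.

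Starting instead from a conformal hyperbolic metric $ds^2=e^{2u}|dz|^2$ representing $\fD$, Liouville's equation $4u_{z\bar z}=e^{2u}$ gives $\partial_{\bar z}\bigl(u_{zz}-u_z^2\bigr)=u_{zz\bar z}-2u_zu_{z\bar z}=0$, so $u_{zz}-u_z^2$ is holomorphic and $2\bigl(u_{zz}-u_z^2\bigr)$ is a well-defined holomorphic Schwarzian on $\fX\backslash{\rm supp}\,\fD$. Solving $\{f,\,z\}=2\bigl(u_{zz}-u_z^2\bigr)$ locally as a ratio of independent solutions of $w''+(u_{zz}-u_z^2)w=0$ yields a locally univalent meromorphic germ, and analytic continuation over $\fX\backslash{\rm supp}\,\fD$ assembles the multivalued developing map $f$. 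Any two solutions with the same Schwarzian differ by a M\"obius map, and normalizing so that $f^{*}g_{\rm st}=ds^2$ forces the comparison transformations to be orientation-preserving isometries of $({\Bbb D},g_{\rm st})$, i.e.\ elements of ${\rm PSU}(1,\,1)$; this both confines the monodromy to ${\rm PSU}(1,\,1)$ and delivers uniqueness up to post-composition by ${\rm PSU}(1,\,1)$. Compatibility with $\fD$ is then the previous local computation run in reverse: a cone of angle $2\pi\theta_j$ gives $u\sim(\theta_j-1)\ln|z|$, hence $u_{zz}-u_z^2\sim\tfrac{1-\theta_j^2}{4z^2}$ and $\{f,\,z\}$ acquires the prescribed principal part $\tfrac{1-\theta_j^2}{2z^2}$.

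The step I expect to be the main obstacle is the two-directional local analysis at the singular points, namely showing precisely that a double pole of the Schwarzian with leading coefficient $\tfrac{1-\theta_j^2}{2z^2}$ is equivalent to the cone/cusp asymptotics of the metric. This requires care in the borderline cusp case $\theta_j=0$, where the indicial exponents coincide and logarithmic solutions with parabolic local monodromy appear, and in any resonant case where the two exponents differ by a positive integer. A secondary but essential point is to guarantee that the reconstructed developing map takes values in ${\Bbb D}$ rather than in a larger part of ${\Bbb P}^1$; this should follow from the completeness and curvature $-1$ normalization of $g_{\rm st}$ together with the positivity of $ds^2$ near the singular points, which pins the image of each local isometry to the unit disc.
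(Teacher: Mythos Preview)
Your overall architecture matches the paper's: both directions are reduced to the local equivalence between the differential-geometric cone/cusp asymptotics and the prescribed double pole of the Schwarzian, and your Frobenius analysis for the direction ``Schwarzian data $\Rightarrow$ cone/cusp metric'' is essentially what the paper does (including the separate handling of integer exponent-differences via the constraint $|f|<1$).

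The genuine gap is in the opposite local direction at a cusp, and it is not primarily a Frobenius/monodromy issue as your last paragraph suggests. With $u=-\ln|z|-\ln(-\ln|z|)+v$ and $v$ merely \emph{continuous} at $0$, the identity $\{f,z\}=2(u_{zz}-u_z^{\,2})$ gives
\[
\{f,z\}=\frac{1}{2z^2}+\frac{2}{z}\left(v_z\Bigl(1+\tfrac{1}{\ln|z|}\Bigr)+z\,v_{zz}-z\,v_z^{\,2}\right),
\]
and there is no a priori reason the bracketed function $F(z)$ should stay bounded near $0$; the formal asymptotic substitution you use for cones does not apply, because $v$ carries no assumed regularity beyond continuity. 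The paper's main technical content is exactly filling this hole: it first shows that $\Delta v$ equals an $L^1$ function \emph{as distributions across the origin} (ruling out $\delta_0$-contributions by testing against rescaled bump functions), then bootstraps to $\nabla v\in L^2$, uses $zv\in W^{2,2}$ together with Sobolev embedding to place $z\,v_z\in L^p$ for all $p$, and concludes $F\in L^{2-\epsilon}$, so that $\{f,z\}$ has at most a double pole. A final contradiction via the Frobenius normal form then excludes any residual $\lambda^2/z^2$ term with $\lambda\neq 0$. This PDE layer is absent from your outline and is not recoverable from the Schwarzian/ODE picture alone.
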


\begin{rem}
{\rm Feng Luo \cite{Luo93} also mentioned the same result as the theorem for hyperbolic metrics with only cone singularities on compact Riemann surfaces. The statement of the theorem near a {\it cusp} singularity is new and non-trivial. Actually, it is reduced to showing that the differential geometric definition coincides with the complex analytical one for cusp singularity of a hyperbolic metric.
The proof of the coincidence is interesting because we need to do some subtle analysis for a second order nonlinear elliptic partial differential equation.   }

\end{rem}

\begin{rem}
{\rm Q. Chen, W. Wang, Y. Wu and the last author \cite[Theorem 3.4]{CWWX15} proved an analogue of Theorem \ref{thm:conn} for cone spherical (constant curvature one) metrics on compact Riemann surfaces, which motivated us to come up with Theorem \ref{thm:conn}. For the convenience of the readers, we state a more general version of \cite[Theorem 3.4]{CWWX15}  as follows:
{\it There exists a conformal cone spherical metric $\widetilde{ds^2}$ representing a ${\Bbb R}$-divisor $\fD=\sum_{j}\,(\theta_j-1)\,\fP_j$ with $\theta_j>0$ on a Riemann surface $\fX$ if and only if there exists a projective function
$\tilde{f}:\fX\backslash {\rm supp}\, \fD\to {\Bbb P}^1$ such that $\tilde{f}$ is compatible with $\fD$ and
the monodromy of $\tilde{f}$ lies in  the group
$${\rm PSU(2)}=\left\{w\longmapsto\frac{aw+b}{-\overline{b}w+\overline{a}}:\ a,\ b\in \mathbb{C},\ \vert a \vert^2+\vert b \vert^2=1\right\}$$
consisting of all the M{\" o}bius transformations preserving
the standard spherical metric $\frac{4|dw|^2}{(1+|w|^2)^2}$ on the Riemann sphere ${\Bbb P}^1$ . Moreover,  $\widetilde{ds^2}=\tilde{f}^*\Big(\frac{4|dw|^2}{(1+|w|^2)^2}\Bigr)$.}
Of course,  Theorem \ref{thm:conn} and its proof are  different from the spherical analogue of them in the sense cusp singularities do not at all appear in spherical metrics.
}
\end{rem}

As an application of Theorem \ref{thm:conn}, we find the following new example of hyperbolic metrics with countably many singularities.

\begin{exam}
\label{exam:disc}
Let $\sum_{j=1}^\infty a_n$ be a convergent series of positive numbers and $\{z_j\}_{j=1}^\infty$ a closed discrete subset of ${\Bbb D}$. Then $h(z):=\sum_{j=1}^\infty\,\frac{a_j}{z-z_j}$
is a meromorphic function on ${\Bbb D}$ and there exists a real number $\lambda_0$ and a one-parameter  family $\{ds^2_\lambda:\,\lambda>\lambda_0\}$ of conformal hyperbolic metrics representing the same ${\Bbb Z}$-divisor ${\frak D}=(h)$ on ${\Bbb D}$. That is, these metrics have cusp singularities at $z_j$'s and a cone singularity of angle $2\pi\big(1+{\rm ord}_w(h)\big)$ at each zero $w$ of $h$.

We obtain a similar statement if we use the finite sum $\sum_{j=1}^N\, a_n$ of positive numbers and a finite subset $\{z_j\}_{j=1}^N$ of ${\Bbb D}$, where $\sum_{j=1}^N\,\frac{a_j}{z-z_j}$ has $(N-1)$ zeros
{\rm (}counting multiplicities{\rm )} on ${\Bbb D}$. We don't know whether $h(z):=\sum_{j=1}^\infty\,\frac{a_j}{z-z_j}$ vanishes on ${\Bbb D}$  although we observe that $h$ has no zero on $\{z\in {\Bbb C}:\, |z|\geq 1\}$. As a compensation, we show that
$$h_0(z)=\sum_{j=1}^\infty\,\frac{1}{2j^3(2j+1)}\,\cdot\, \frac{1}{z-\Big(1-\frac{1}{2j-1}\Big)}$$
has infinitely many zeros on ${\Bbb D}$.

  \end{exam}

\begin{rem}{\rm
Using the punctured disc case of Theorem \ref{thm:conn}, Yiqian Shi and the second and the last authors \cite{Feng2017} obtained an explicit local model of an isolated singularity of a hyperbolic metric
in a suitably chosen complex coordinate around the singularity. Moreover, based on the correspondence in this theorem and following the ideas in \cite{CWWX15,SCLX18}, they \cite{Feng2019} have been investigating systematically new hyperbolic metrics with isolated singularities on noncompact Riemann surfaces by using both subharmonic functions and Abelian differentials. }
\end{rem}

We conclude this section by explaining the organization of this manuscript. In Section 2, we give the complex analytical definition (Definition \ref{defn:cx}) of both cone and cusp singularities of a hyperbolic metric and prove that the differential geometric definition implies the complex analytical one by the PDE method. As a consequence, we obtain the necessary part of Theorem \ref{thm:conn}.
In the last section, we prove that complex analytical definition implies that the differential geometric one, which also implies the sufficient part of Theorem \ref{thm:conn}.
We also provide in this section the details of Example \ref{exam:disc} and propose three questions.

\section{The complex analytical definition}
\paragraph{}

At first we give the complex analytical definition of cone/cusp singularity of a hyperbolic metric.
Consider a hyperbolic metric $ds^2=e^{2u}|dz|^2$ on the punctured disc $U^*:=U-\{0\}=\{0<|z|<1/2\}$, where $U=\{|z|<1/2\}$.
By the similar argument as \cite[Lemma 2.1]{CWWX15}, there exists a projective function $f:U^*\to {\Bbb D}$ with monodromy in ${\rm PSU(1,\,1)}$ such that $ds^2=f^*g_{\rm st}$.
We call $f$ a {\it developing map} of the hyperbolic metric $ds^2$, which is unique up to a post-composition by a M\" obius transformation in ${\rm PSU(1,\,1)}$.
Moreover, the Schwarzian derivative $\{f,\,z\}$ of $f$ is meromorphic in $U^*$.

\begin{defn}
\label{defn:cx}
{\rm We call that  $z=0$ is a {\it cusp singularity} of a hyperbolic metric $ds^2$ on $U^*$ if and only if near $z=0$ there holds
$\{f,\,z\}=\frac{1}{2z^2}+\frac{b_0}{z}+h(z)$, where $b_0$ is a constant and $h(z)$ is holomorphic near $z=0$.
We call that  $z=0$ is a {\it cone singularity} with angle $2\pi\theta>0$ of $ds^2$ if and only if near $z=0$ there holds
$\{f,\,z\}=\frac{1-\theta^2}{2z^2}+\frac{c_0}{z}+g(z)$, where $c_0$ is a constant and $g(z)$ is holomorphic near $z=0$.}
\end{defn}

\begin{lemma}
\label{lem:equiv}
The differential geometric definition of  cone/cusp singularity of a hyperbolic metric in $U^*$ in the first paragraph of Section 1 coincides with the complex analytical one as in Definition \ref{defn:cx}.
\end{lemma}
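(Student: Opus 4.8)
The plan is to reduce everything to the classical identity relating the Schwarzian derivative of a developing map to the conformal factor. Writing $ds^2=e^{2u}|dz|^2=f^*g_{\rm st}$ gives $u=\ln 2+\ln|f'|-\ln(1-|f|^2)$, and differentiating twice in $z$ (using that $f$ is holomorphic, so $\partial_z\overline f=0$) the mixed terms cancel and one obtains the key formula
\[
\{f,\,z\}=2\bigl(u_{zz}-u_z^2\bigr),\qquad u_z:=\partial_z u,\ \ u_{zz}:=\partial_z^2 u,
\]
equivalently $\{f,\,z\}=-2\,e^{u}\,\partial_z^2(e^{-u})$. Since $ds^2$ is hyperbolic, $u$ solves $\Delta u=e^{2u}$ on $U^*$, and using $u_{z\bar z}=\tfrac14 e^{2u}$ one checks $\partial_{\bar z}(u_{zz}-u_z^2)=0$, confirming that the right-hand side is holomorphic on $U^*$, in agreement with $\{f,\,z\}$ being meromorphic there. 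The whole lemma thus reduces to matching the behaviour of $u_{zz}-u_z^2$ near $z=0$ with the prescribed Laurent singularity, and since $\{f,\,z\}$ is already holomorphic on $U^*$ it suffices in each case to show that it extends meromorphically across $0$ with a pole of order at most two, to identify the leading coefficient, and to extract the residue.

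For the direction \emph{differential geometric $\Rightarrow$ complex analytic} I would substitute the ansatz. In the cone case $u=(\theta-1)\ln|z|+v$ with $v$ continuous; as $\ln|z|$ is harmonic on $U^*$ one finds
\[
\{f,\,z\}=\frac{1-\theta^2}{2z^2}-\frac{2(\theta-1)\,v_z}{z}+2\bigl(v_{zz}-v_z^2\bigr),
\]
so the principal term $\tfrac{1-\theta^2}{2z^2}$ appears automatically. In the cusp case $u=-\ln|z|-\ln(-\ln|z|)+v$ the same substitution yields $\tfrac{1}{2z^2}$ after the $\ln|z|$-weighted contributions cancel exactly, leaving
\[
\{f,\,z\}=\frac{1}{2z^2}+\frac{2v_z}{z}+\frac{2v_z}{z\ln|z|}+2\bigl(v_{zz}-v_z^2\bigr).
\]
It then remains to show that the remainders contribute a pole of order at most one; equivalently that $|z|^2|u_{zz}-u_z^2|$ stays bounded and that $z\cdot(\text{remainder})$ has a finite limit. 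This is where the nonlinear equation enters: writing it for $v$ as $\Delta v=|z|^{2(\theta-1)}e^{2v}$ in the cone case and $\Delta v=\dfrac{e^{2v}-1}{|z|^2(\ln|z|)^2}$ in the cusp case, I would bootstrap from the continuity of $v$ via $W^{2,p}$ and Schauder estimates to obtain that $v_z$ is bounded and that $zv_z^2\to 0$ and $zv_{zz}\to 0$ as $z\to 0$, which forces the claimed form with, e.g., $b_0=\lim_{z\to0}\bigl(2v_z+\tfrac{2v_z}{\ln|z|}+2zv_{zz}-2zv_z^2\bigr)$ in the cusp case.

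For the converse \emph{complex analytic $\Rightarrow$ differential geometric} I would pass to the second-order linear picture: write $f=w_1/w_2$ with $w_1,w_2$ independent solutions of the Fuchsian equation $w''+\tfrac12\{f,\,z\}\,w=0$, which has a regular singular point at $0$ with indicial roots $\tfrac{1\pm\theta}{2}$. For $\theta>0$ the Frobenius method gives two solutions $\sim z^{(1\pm\theta)/2}$, hence $f\sim c\,z^{\theta}$ after normalizing the interior limit to $0$; then $\ln|f'|=(\theta-1)\ln|z|+O(1)$ and $\ln(1-|f|^2)\to 0$, recovering $u=(\theta-1)\ln|z|+v$ with $v$ continuous. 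For a cusp $(\theta=0)$ the two indicial roots coincide at $\tfrac12$, a logarithmic solution appears, the monodromy is parabolic, and the ${\rm PSU}(1,\,1)$ constraint forces $f$ to limit to a boundary point of ${\Bbb D}$; expanding $u=\ln\frac{2|f'|}{1-|f|^2}$ then reproduces both the $-\ln|z|$ and the $-\ln(-\ln|z|)$ terms with $v$ continuous.

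The hard part will be the cusp case. On the PDE side the weight $\frac{1}{|z|^2(\ln|z|)^2}$ is exactly critical for integrability, so standard elliptic estimates are borderline: one must first establish the decay $v\to 0$ (a rigidity forced by the weight) and then use refined, weighted estimates to pin down the asymptotics of $v_z$ and $v_{zz}$ and to rule out any spurious $1/z^2$ contribution from the cross term $\frac{2v_z}{z\ln|z|}$. On the ODE side it is the degenerate (logarithmic) Frobenius analysis at the double indicial root, together with the control of the boundary behaviour of $f$, that must be handled carefully. By contrast the cone case, governed by distinct indicial exponents and an interior limit, is comparatively routine.
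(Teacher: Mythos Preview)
Your overall architecture matches the paper's: the identity $\{f,z\}=2(u_{zz}-u_z^2)$, the substitution of the prescribed form of $u$ to extract the $\tfrac{1-\theta^2}{2z^2}$ leading term, and the Fuchsian/Frobenius analysis for the converse are all exactly what the paper does. The cone case and the complex-analytic $\Rightarrow$ differential-geometric direction are fine as you describe them.

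The gap is in the cusp forward direction. You propose to ``bootstrap from the continuity of $v$ via $W^{2,p}$ and Schauder estimates'' to get $v_z$ bounded and $zv_{zz},\,zv_z^2\to 0$, i.e.\ to show directly that
\[
F(z):=v_z\Bigl(1+\frac{1}{\ln|z|}\Bigr)+zv_{zz}-zv_z^2
\]
extends holomorphically across $0$. But the source term $\dfrac{e^{2v}-1}{|z|^2(\ln|z|)^2}$ lies in no $L^p$ with $p>1$, so the $W^{2,p}$ bootstrap you invoke does not start, and no Schauder iteration will give a pointwise bound on $v_z$ from mere continuity of $v$. You recognise the estimates are ``borderline'' and appeal to unspecified ``refined, weighted estimates''; that is precisely the step the paper regards as nontrivial, and it does \emph{not} proceed this way. (Your side claim that one must first prove $v\to 0$ is also not part of the definition and is not used.)

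The paper's device is a hybrid of weak PDE control and ODE rigidity. First, using only that $\Delta v\in L^1$ and distribution arguments, it shows $\nabla v\in L^2$; then, setting $\tilde v=zv$, it gets $\Delta\tilde v\in L^2$, hence $\tilde v\in W^{2,2}$, which yields $zv_z\in L^p$ for all $p$ and $zv_{zz}\in L^2$. H\"older then gives $F\in L^{2-\epsilon}$, so $F$ has at most a \emph{simple} pole at $0$---not yet that it extends. The simple pole is then ruled out by contradiction: if it were present, $\{f,z\}$ would have leading coefficient $\tfrac{1-\lambda^2}{2}$ with $\lambda\ne 0$, and the Frobenius analysis (together with $|f|<1$ and ${\rm PSU}(1,1)$ monodromy) would force a genuine cone singularity, contradicting the differential-geometric cusp hypothesis. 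In other words, the paper never proves the pointwise asymptotics you aim for; it trades them for a much softer integrability bound and lets the ODE side finish the job.
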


We leave to the next section the proof that the complex analytical definition implies the differential geometric one. We shall prove the implication of the opposite direction in what follows.

Suppose that in the differential geometric sense, $z=0$ is a cone singularity of angle $2\pi\theta>0$ of the hyperbolic metric $ds^2=e^{2u}|dz|^2$ in $U^*=\{0<|z|<1/2\}$. Then, by the similar computation in \cite[Lemma 3.1]{CWWX15} and \cite[Lemma, Section 3]{Tr89}, we find that the Schwarzian derivative of $f$ equals $2\Big(\frac{\partial ^2u}{\partial z^2}-(\frac{\partial u}{\partial z})^2 \Big)$, which has the form of
$\frac{1-\theta^2}{2z^2}+\frac{c_0}{z}+g(z)$, where $c_0$ is a constant, and $g(z)$ is holomorphic near $z=0$.
Hence, $z=0$ is also a cone singularity in the sense of Definition \ref{defn:cx}.
However, the same argument could not go through for $\{f,\,z\}$ if $z=0$ is a cusp singularity of $ds^2$ in
the differential geometric sense.
In the remaining part of this section, we shall show by a different PDE method from \cite[Lemma, Section 3]{Tr89} that {\it $f$ is a cusp singularity in the sense of Definition \ref{defn:cx}}.

Suppose that $z=0$ is a cusp singularity of the hyperbolic metric $ds^2=e^{2u}|dz|^2$ in $U^*=\{0<|z|<1/2\}$ in the differential geometric sense. By the very definition, $v:=u+\ln\,|z|+\ln(-\ln\, |z|)$ is continuous on $U$. Then we have
$$ds^2=e^{2u}|dz|^2=f^* g_{\rm st}=\frac{4|f'|^2|dz|^2}{(1-|f|^2)^2}\quad {\rm and}\quad  u=\ln\, 2+\ln\,|f'|-\ln\, (1-|f|^2).$$
It suffices to show that
$\{f,\,z\}=\frac{1}{2z^2}+\frac{d}{z}+\psi(z)$,
where $d$ is a constant and $\psi$ is holomorphic in $U$.
By computation, there holds in $U^*$ that
\begin{eqnarray*}
\{f,\,z\}=2\Bigg(\frac{\partial ^2u}{\partial z^2}-\bigg(\frac{\partial u}{\partial z}\bigg)^2 \Bigg)
=\dfrac{1}{2z^2}+\dfrac{2}{z} \Biggl( \dfrac{\partial v}{\partial z}\bigg(1+\frac{1}{\ln\,|z|}\bigg)+z\dfrac{\partial^2 v}{\partial z^2}-z\bigg(\dfrac{\partial v}{\partial z}\bigg)^2\Biggr),
\end{eqnarray*}
which is holomorphic in $U^*$ since $ds^2$ is hyperbolic there \cite[Lemma, Section 3]{Tr89}.
The problem is reduced to showing

\begin{lemma}
 {\it The holomorphic function
$$F(z):=\dfrac{\partial v}{\partial z}\bigg(1+\frac{1}{ln|z|}\bigg)+z\dfrac{\partial^2 v}{\partial z^2}-z\bigg(\dfrac{\partial v}{\partial z}\bigg)^2$$ in $U^*$ extends to $z=0$.}
\end{lemma}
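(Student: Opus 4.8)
The plan is to show that the holomorphic function $F$ extends across $z=0$ by verifying the removable-singularity criterion $\lim_{z\to0}zF(z)=0$. Since $F$ is already holomorphic on $U^*$, this condition is equivalent to the desired conclusion: if $H(z):=zF(z)$ tends to $0$, then $H$ is bounded, hence (Riemann) extends holomorphically with $H(0)=0$, so $H(z)=z\,\widetilde H_1(z)$ and $F=\widetilde H_1$ is holomorphic at the origin. To control the three terms of $F$ I first convert the hyperbolic condition into an elliptic equation for $v$. Because $ds^2=e^{2u}|dz|^2$ has curvature $-1$, we have $4\,\partial_z\partial_{\bar z}u=e^{2u}$. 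Writing $t:=\ln|z|$, the definition of the cusp gives $u=v-t-\ln(-t)$, where $t$ is harmonic and a short computation yields $\partial_z\partial_{\bar z}\ln(-t)=-\tfrac{1}{4|z|^2t^2}$. Substituting, I obtain on $U^*$ the semilinear equation
\[
\partial_z\partial_{\bar z}v=\frac{e^{2v}-1}{4|z|^2t^2},\qquad t=\ln|z|.
\]

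Next I would set up a rescaling near the cusp. For $z_0$ with $|z_0|=r$ small, the disc $B(z_0,r/2)$ lies in $U^*$; putting $z=r\zeta$ and $\tilde v(\zeta):=v(r\zeta)$ transforms the equation into $\Delta_\zeta\tilde v=\tilde f$ on $B(\zeta_0,1/2)$ (with $\zeta_0=z_0/r$, $|\zeta_0|=1$), where $\tilde f=\dfrac{e^{2\tilde v}-1}{|\zeta|^2(\ln r+\ln|\zeta|)^2}$. Two smallness inputs drive the argument. First, since $\ln|z|\approx\ln r$ on this ball, $\|\tilde f\|_{C^0}\le C/(\ln r)^2\to0$. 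Second, because $v$ is \emph{continuous} at $0$, its oscillation $\omega(r)$ over $\{|z|\sim r\}$ tends to $0$ as $r\to0$. Applying the oscillation form of the interior gradient estimate (subtracting a constant, which leaves $\Delta_\zeta\tilde v$ unchanged) gives $\|\partial_\zeta\tilde v\|_{C^0}\le C\bigl(\omega(r)+(\ln r)^{-2}\bigr)=:\varepsilon_1(r)\to0$. Once $\tilde v$ is $C^1$-bounded, one checks $\|\tilde f\|_{C^{\alpha}}\le C/(\ln r)^2$, and the oscillation form of interior Schauder estimates yields $\|\partial^2_\zeta\tilde v\|_{C^0}\le C\bigl(\omega(r)+(\ln r)^{-2}\bigr)=:\varepsilon_2(r)\to0$.

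Undoing the scaling converts these into $|v_z|\le\varepsilon_1(|z|)/|z|$ and $|v_{zz}|\le\varepsilon_2(|z|)/|z|^2$ near $0$, that is $v_z=o(1/|z|)$ and $v_{zz}=o(1/|z|^2)$. Consequently $|zv_z|\le C\varepsilon_1\to0$ and $|z^2v_{zz}|\le C\varepsilon_2\to0$, so each term of
\[
zF(z)=zv_z\Bigl(1+\tfrac{1}{t}\Bigr)+z^2v_{zz}-(zv_z)^2
\]
tends to $0$ (using $|1+1/t|\le2$). Hence $zF(z)\to0$, and by the criterion above $F$ extends holomorphically to $z=0$, which is exactly the assertion of the lemma (and then $\{f,z\}=\frac{1}{2z^2}+\frac{d}{z}+\psi(z)$ with $d=2F(0)$ and $\psi$ holomorphic).

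The hard part will be the derivative estimate. The naive interior estimate only bounds $v_z$ by $C/|z|$, which would make $F$ of order $1/|z|$ and leave the conclusion out of reach. The decisive gain comes from two compounding effects — the rescaled right-hand side decays like $(\ln r)^{-2}$, and the oscillation $\omega(r)$ vanishes by continuity of $v$ — which together upgrade the bounds to $v_z=o(1/|z|)$ and $v_{zz}=o(1/|z|^2)$. I expect the main subtlety to be organizing this bootstrap without circularity (the $C^0$-bound on $\tilde f$ suffices for the gradient estimate, and only afterward is the $C^\alpha$-bound needed for the second-derivative estimate), together with the observation that one should aim for $zF\to0$ rather than boundedness of $F$, since the estimates only barely fail to bound $F$ itself.
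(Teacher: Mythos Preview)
Your argument is correct and takes a genuinely different route from the paper's. The paper proceeds in two stages: first it proves the energy bound $\nabla v\in L^2$ near $0$ (via a distributional argument showing $\Delta v=h$ holds across the origin), then uses $zv\in W^{2,2}$ and Sobolev embedding to deduce $F\in L^{2-\epsilon}$, which only yields that $z=0$ is at most a \emph{simple} pole of $F$. The simple-pole case is then ruled out by contradiction: if $\{f,z\}=\frac{1-\lambda^2}{2z^2}+\cdots$ with $\lambda\ne 0$, the Frobenius method for the associated Fuchsian ODE, combined with $|f|<1$ and monodromy in ${\rm PSU}(1,1)$, forces $z=0$ to be a cone singularity of angle $2\pi\lambda$, contradicting the differential-geometric cusp hypothesis. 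Your approach bypasses both the integrability machinery and the ODE classification: by rescaling to balls of fixed size and using the interior gradient and Schauder estimates in oscillation form---crucially exploiting that the rescaled right-hand side is $O((\ln r)^{-2})$ and that the continuity of $v$ at $0$ forces $\mathrm{osc}\,\tilde v\to 0$---you obtain the sharper pointwise bounds $zv_z\to 0$ and $z^2v_{zz}\to 0$, whence $zF(z)\to 0$ directly. This is more elementary and more constructive (it immediately identifies $d=2F(0)$), whereas the paper's route trades pointwise control for softer $L^p$ estimates and then compensates with the structural ODE argument; the latter has the minor advantage that the Frobenius step is reused verbatim when establishing the converse implication in Section~3.
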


We need two lemmas for the proof of Lemma 2.2.

\begin{lemma}
Denote ${\bf D}:=\{|z|<1/5\}\subset U$. Then
$\int_{{\bf D}} |\nabla v|^2 <+\infty. $ Here and later on we omit in the integrals the standard Lebesgue measure $\frac{\sqrt{-1}}{2}\, {\rm d}z\wedge {\rm d}{\bar z}$ on the Euclidean plane ${\Bbb C}\supset {\bf  D}$.
\end{lemma}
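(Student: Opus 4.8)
The plan is to convert finiteness of the Dirichlet energy into a weighted a priori estimate extracted from the curvature equation. Since $ds^2=e^{2u}|dz|^2$ is hyperbolic on $U^*$, the conformal factor satisfies $\Delta u=e^{2u}$ there, with $\Delta=\partial_x^2+\partial_y^2=4\,\partial_z\partial_{\bar z}$. Writing $u=v-\ln|z|-\ln(-\ln|z|)$ and using $\Delta\ln|z|=0$ on $U^*$ together with the radial computation $\Delta\ln(-\ln|z|)=-\frac{1}{|z|^2(\ln|z|)^2}$, I would first record the equation for $v$:
$$\Delta v=e^{2u}-\frac{1}{|z|^2(\ln|z|)^2}=\frac{e^{2v}-1}{|z|^2(\ln|z|)^2}\quad\text{on }U^*,$$
using $e^{2u}=e^{2v}\big/\bigl(|z|^2(\ln|z|)^2\bigr)$. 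Two features of the right-hand side will drive everything: the factor $v(e^{2v}-1)\ge 0$ always has the sign of $v^2$, and the weight $\frac{1}{|z|^2(\ln|z|)^2}$, though singular at $0$, is integrable against the area form because $\int_{0}^{1/5}\frac{r\,dr}{r^2(\ln r)^2}=\int_{0}^{1/5}\frac{dr}{r(\ln r)^2}<\infty$ — the $(\ln|z|)^2$ factor supplied by the cusp normalization is exactly what makes this borderline integral converge.

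Next I would fix cutoffs. Because $v$ extends continuously to $U$, it is bounded near $0$; set $M:=\sup_{|z|\le 1/4}|v|<\infty$. Choose a smooth $\eta$ with $\eta\equiv 1$ on ${\bf D}=\{|z|<1/5\}$ and ${\rm supp}\,\eta\subset\{|z|<1/4\}\subset U$, and for small $\epsilon>0$ a logarithmic cutoff $\psi_\epsilon$ with $\psi_\epsilon\equiv 0$ on $\{|z|\le\epsilon\}$, $\psi_\epsilon\equiv 1$ on $\{|z|\ge\sqrt{\epsilon}\}$, interpolating log-linearly, so that $\int|\nabla\psi_\epsilon|^2=O\bigl(1/\ln(1/\epsilon)\bigr)\to 0$ (points have zero capacity in the plane). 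Set $\chi:=\eta\,\psi_\epsilon$, which is smooth and compactly supported in the annulus $\{\epsilon\le|z|\le 1/4\}\subset U^*$, where $v$ is smooth and the equation above holds.

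The core is a single integration by parts with test function $\chi^2 v$, which produces no boundary terms:
$$\int\chi^2|\nabla v|^2=-\int\chi^2 v\,\Delta v\;-\;2\int v\,\chi\,\nabla\chi\cdot\nabla v.$$
I would bound the cross term by Young's inequality, $2\bigl|\int v\chi\,\nabla\chi\cdot\nabla v\bigr|\le\tfrac12\int\chi^2|\nabla v|^2+2\int v^2|\nabla\chi|^2$, absorb the first piece into the left-hand side, and observe that $\int v^2|\nabla\chi|^2\le 2M^2\int|\nabla\eta|^2+2M^2\int|\nabla\psi_\epsilon|^2$ stays uniformly bounded in $\epsilon$. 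For the remaining term I would use either its favourable sign (the integrand $\chi^2 v(e^{2v}-1)\big/\bigl(|z|^2(\ln|z|)^2\bigr)\ge 0$, so $-\int\chi^2 v\,\Delta v\le 0$) or, more robustly, the weight estimate $\bigl|\int\chi^2 v\,\Delta v\bigr|\le\|v(e^{2v}-1)\|_{L^\infty(\{|z|\le1/4\})}\int_{{\bf D}}\frac{dA}{|z|^2(\ln|z|)^2}<\infty$. Either way $\int\chi^2|\nabla v|^2\le C$ with $C$ independent of $\epsilon$; letting $\epsilon\to 0$ and applying Fatou's lemma (since $\chi^2\to\eta^2$ pointwise on $U^*$) yields $\int_{{\bf D}}|\nabla v|^2\le\int\eta^2|\nabla v|^2\le C<\infty$.

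The main obstacle is the rigorous treatment of the point singularity at $z=0$: the equation only holds on $U^*$ and $v$ is known to be merely continuous — not a priori in $H^1$ — at the puncture, so the integration by parts cannot be carried out directly on a full disc. The logarithmic (zero-capacity) cutoff $\psi_\epsilon$ is what circumvents this, and the scheme closes only because the cutoff's own Dirichlet energy vanishes in the limit while the singular weight $1/\bigl(|z|^2(\ln|z|)^2\bigr)$ remains integrable against $dA$ — both being delicate consequences of the specific cusp normalization $v=u+\ln|z|+\ln(-\ln|z|)$.
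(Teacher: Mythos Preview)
Your argument is correct, and it takes a genuinely different route from the paper's. The paper proceeds in three steps: first it upgrades the equation $\Delta v=\frac{e^{2v}-1}{|z|^2(\ln|z|)^2}$ from $U^*$ to all of $U$ in the sense of distributions (by showing that $\Delta v-h$, supported at the origin, cannot be a nonzero combination of $\partial^\alpha\delta_0$); then it mollifies $v$ to $v_k=\chi_{1/k}*v$, integrates by parts over the full disc with a boundary term, and bounds $\int|\Delta v_k|$ uniformly via the $L^1$ bound on $h$; finally it checks that the distributional partial derivatives of $v$ on $\mathbf D$ agree with the smooth pointwise ones on $\mathbf D^*$. By contrast you never leave $U^*$: you run a Caccioppoli-type estimate with the test function $\chi^2 v$, where $\chi$ is the product of an outer cutoff and a logarithmic inner cutoff $\psi_\epsilon$ whose Dirichlet energy vanishes as $\epsilon\to 0$; the sign $v(e^{2v}-1)\ge 0$ (or the $L^1$ bound on the weight) controls the Laplacian term, and Fatou closes the argument. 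Your method is more elementary---no distribution theory, no mollification---and isolates the one crucial analytic fact, that points have zero capacity in the plane. What the paper's longer route buys is its Step~3, the identification of the pointwise $\nabla v$ on $\mathbf D^*$ with the distributional gradient of $v$ on $\mathbf D$; this is tacitly used in the next lemma when elliptic regularity is invoked for $\tilde v=zv$. Your proof establishes the lemma exactly as stated, and the missing identification is in any case a standard removable-singularity fact for $W^{1,2}$ in two dimensions (or can be recovered by the paper's short Step~3 argument).
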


\begin{proof} The proof is divided into three steps.

{\it Step 1} Since $g=e^{2u}\,|dz|^2$ is a hyperbolic metric on $U^*$, we have $\Delta u=4\,\frac{\partial^2 u}{\partial z \partial {\bar z}}=e^{2u}$ on $U^*$. Since $v=u+\ln\,|z|+\ln(-\ln\, |z|)$, we rewrite the former equation as the following one
\[\Delta  v=h:=\dfrac{e^{2v}-1}{|z|^2(\ln\,|z|)^2}\quad {\rm in}\quad U^*.\]
Recall that $v$ is continuous in $U$ and $h$ is locally integrable in $U$.  Hence, both sides of this equation can be thought of as distributions in $U$.
Now we shall prove that {\it this equation holds in $U$ in the sense of distribution.}

As a distribution, the support of $\Delta v-h $ is contained in $\{z=0\}$. By \cite[Theorem 2.3.4]{Hor90}, $\Delta v-h$ equals a linear combination of the Dirac delta function $\delta_0$ and its partial derivatives, i.e. $ \Delta v-h=\Sigma C_\alpha \partial^\alpha \delta_0 $, where there are at most finitely many nonzero constants $C_\alpha$. Take an arbitrary multi-index $\alpha$ and fix it. We can choose a function
$\phi\in C_0^\infty(U)$ such that $ \partial^\alpha \phi(0) \ne 0 $ and  $ \partial^\beta \phi(0) = 0 $ for all $ \beta \ne \alpha $. Denote $\phi_k(z)=\phi(kz)$. Then
$$ h(\phi_k)=\int_{U} h\phi_k \le \sup|\phi_k|\int_{{\rm supp}\,\phi_k} h\to 0\quad {\rm as}\quad k\to \infty. $$
%But $ sup|\phi_k|=sup|\phi| $, and when $ k $ tends to infinity, $ supp(\phi_k) $ will  contract to a ponit. This shows $ h(\phi_k)\to 0 $ as $ k\to \infty $.
Moreover, since $ \int_U \Delta \phi=0  $ and $v$ is continuous at $z=0$, we have
$$ | \Delta v(\phi_k)| =\left| \int_U v(z/k) \Delta \phi -\int_{U} v(0) \Delta \phi \right| \le \int_{U} |v(z/k)-v(0)| |\Delta \phi| \to 0.$$
%Let $ \Delta v $ act on $ \phi_k $, we have
%$$  \Delta v(\phi_k)=\int_{\mathbb{D}} v \Delta \phi_k = \int_{\mathbb{D}} v(z)k^2\Delta \phi(kz)  = \int_{\mathbb{D}} v(\frac{z}{k}) \Delta \phi$$
%Note that $ \int_{\mathbb{D}} \Delta \phi=0  $, when $ k\to \infty $
 %$$ | \Delta v(\phi_k)| =| \int_{\mathbb{D}} v(\frac{z}{k}) \Delta \phi -\int_{\mathbb{D}} v(0) \Delta \phi | \le \int_{\mathbb{D}} |v(\frac{z}{k})-v(0)| |\Delta \phi| \to 0$$
Hence $ (\Delta v -h)(\phi_k)\to 0 $ as $k\to \infty$. On the other hand, $ (\Delta v -h)(\phi_k) = C_\alpha k^{|\alpha|}  \partial^\alpha \phi(0)$, which implies $ C_\alpha=0 $. Thus  $ \Delta v=h $ on $U$ as distributions.

{\it Step 2} Recall that $v$ is smooth in $U^*$ and continuous at $z=0$.
 We shall prove $$ \int_{\bf{D}^*} |\nabla v|^2 <+\infty .$$
Choose a family  $\{\chi_\epsilon:\,\epsilon>0\}$ of compactly supported non-negative smooth functions in ${\Bbb C}$ such that $ \int_{\Bbb C} \chi_\epsilon =1 $ and ${\rm supp}\, \chi_\epsilon \subset \{|z|\leq \epsilon\}$. Since $ v $ is continuous on $U$,  the convolutions $ v_k:=\chi_\frac{1}{k} * v $, $k=5,6,7,\cdots$, are well defined
smooth functions in $\overline{\bf D}$, which converge uniformly to $v$ on $\overline{\bf D}$ as $k\to\infty$.  Moreover, since $ v\in C^\infty(\bf{D}^*) $, as $k\to\infty$, $ |\nabla v_k|^2 \to |\nabla v|^2 $ uniformly in any compact subsets of $ \bf{D}^* $. By Fatou's lemma, we have
 $$  \int_{\bf{D}^*} |\nabla v|^2 \le  \liminf\limits_{k} \int_{\mathbb{D}^*} |\nabla v_k|^2. $$
Then we show that {\it the integrals $ \int_{\bf{D}^*} |\nabla v_k|^2 $ are uniformly bounded for all $k=5,6,7,\cdots$}. Using integration by part, we have
 $$  \int_{\mathbb{D}^*} |\nabla v_k|^2 =\int_{\mathbb{D}} |\nabla v_k|^2 =-\int_{\mathbb{D}} v_k\Delta v_k + \int_{\partial \mathbb{D}} v_k \frac{\partial v_k}{\partial \vec{n}}. $$
Recall that $v_k\to v$ uniformly on $\overline{\bf D}$ and $  \frac{\partial v_k}{\partial \vec{n}} \to \frac{\partial v}{\partial \vec{n}}$ uniformly on $ \partial \bf{D} $.
 The Problem is reduced to showing that $ \int_{\bf{D}} |\Delta v_k|$ is uniformly bounded. Actually, as $k\geq 5$, we have
 %so that $ \int_{\partial \mathbb{D}} |\frac{\partial v_k}{\partial \vec{n}}| \le \int_{\partial \mathbb{D}} |\frac{\partial v}{\partial \vec{n}} | +1$ for $k$ large enough.
 \begin{eqnarray*}
 \int_{\bf{D}} |\Delta v_k| &=& \int_{|z|<1/5}\left|\int_{|\tilde{z}|<2/5} \chi_{1/k}(z-\tilde{z})\Delta v(\tilde{z})\right| \\
 &\le & \int_{\Bbb C} \chi_{1/k} \int_{|z|<2/5} |\Delta v|=\int_{|z|<2/5}\,|h| <\infty.
 \end{eqnarray*}
 Thus we conclude $ \int_{\bf{D}^*} |\nabla v|^2 <+\infty $.

{\it Step 3} Denote the standard coordinate $z$ in ${\bf D}\subset {\Bbb C}$ by $z=x+\sqrt{-1}y$.
Then $w:=\frac{\partial v}{\partial x}$ is a smooth and square integrable function on $ \bf{D}^* $, which
can be thought of as an square integrable function and then a distribution in ${\bf D}$. The partial derivative $\frac{\partial v}{\partial x}$ of the continuous function $v$ in ${\bf D}$ is also a distribution in ${\bf D}$. We shall show that {\it  the two distributions $\frac{\partial v}{\partial x}$ and $w$ coincide.}
Take a smooth test function $ \phi $ supported in $ \bf{D} $. By the Fubini theorem, we have
\begin{eqnarray*}
\bigg(\frac{\partial v}{\partial x}-w\bigg)(\phi)&=&\iint\,\bigg(-v\frac{\partial \phi}{\partial x}-w\phi\bigg)\\
 &=&\lim\limits_{\epsilon\to 0^+} \int_{-\infty}^\infty\, dy\int_{|x|>\epsilon} \bigg(-v\frac{\partial \phi}{\partial x}-w\phi\bigg) dx \\
&=&  \lim\limits_{\epsilon\to 0^+} \int_{-\infty}^\infty\, \Big( v(\epsilon,y)\phi(\epsilon,y)-v(-\epsilon,y)\phi(-\epsilon,y) \Big)dy\\&=&0.
 \end{eqnarray*}
The similar statement holds for $ \frac{\partial v}{\partial y} $. Therefore, we complete the proof.
\end{proof}

\begin{lemma}

$F(z)$ is in $L^{2-\epsilon} (\bf{D}) $ for all $0<\epsilon <1$. In particular, $z=0$ is at most a simple pole of $F(z)$.

\end{lemma}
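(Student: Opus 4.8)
The plan is to control each of the three summands defining $F$ separately in $L^{2-\epsilon}({\bf D})$, and then to read off the pole order from the holomorphy of $F$. Throughout write $g:=\partial v/\partial z$, so that $F=g\bigl(1+\tfrac{1}{\ln|z|}\bigr)+z\,\partial g/\partial z-z\,g^{2}$. By the preceding lemma $\nabla v\in L^{2}({\bf D})$, hence $g\in L^{2}({\bf D})$; moreover the distributional identity $\Delta v=h$ on $U$ from Step 1, together with $\Delta=4\,\partial^{2}/\partial z\partial\bar z$ and the fact (Step 3) that the distributional derivatives of $v$ are the classical $L^{2}$ ones, gives $\partial g/\partial\bar z=\tfrac14 h$ as distributions across $z=0$ on ${\bf D}$. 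Since $v$ is continuous, $|e^{2v}-1|$ is bounded, so $|h|\le C\,|z|^{-2}(\ln|z|)^{-2}$. The first summand is then already in $L^{2}$, because $1+\tfrac{1}{\ln|z|}$ is bounded on ${\bf D}=\{|z|<1/5\}$ (there $\ln|z|$ stays negative and bounded away from $0$).

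The crux is the second-order term $z\,\partial g/\partial z$ and the quadratic term $z\,g^{2}$, neither of which is controlled by $g\in L^{2}$ alone. The key observation is to pass to $G:=z\,g=z\,\partial v/\partial z$. One has $G\in L^{2}({\bf D})$ and, since $\partial z/\partial\bar z=0$, the Leibniz rule gives $\partial G/\partial\bar z=z\,\partial g/\partial\bar z=\tfrac{z}{4}\,h$. The weight $z$ upgrades the borderline datum $h$: from $|z\,h|\le C\,|z|^{-1}(\ln|z|)^{-2}$ one checks, in polar coordinates with the substitution $s=-\ln r$, that $\int_{\bf D}|z\,h|^{2}<\infty$, so $\partial G/\partial\bar z\in L^{2}({\bf D})$. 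Thus $G$ solves a $\bar\partial$-equation with $L^{2}$ right-hand side, and by the $L^{2}$-boundedness of the Ahlfors--Beurling transform, applied to a compactly supported cutoff of $G$, we obtain $\partial G/\partial z\in L^{2}$ on a slightly smaller disc, hence $G\in W^{1,2}$ there. By the two-dimensional Sobolev embedding $W^{1,2}\hookrightarrow L^{q}$ for every finite $q$, we conclude $G\in\bigcap_{q<\infty}L^{q}$ near $0$.

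With this in hand the two remaining terms follow. Writing $z\,\partial g/\partial z=\partial G/\partial z-g$ shows the second-order term lies in $L^{2}$; writing $z\,g^{2}=G^{2}/z$ and applying H\"older's inequality, using $G^{2}\in L^{q}$ for all finite $q$ and $1/z\in L^{p}$ for every $p<2$, shows the quadratic term lies in $L^{2-\epsilon}$ for every $\epsilon\in(0,1)$. Summing the three contributions, and noting that $F$ is holomorphic, hence bounded, on the annulus separating the smaller disc from ${\bf D}$, we get $F\in L^{2-\epsilon}({\bf D})$ for all $\epsilon\in(0,1)$. Finally, since $F$ is holomorphic on ${\bf D}\setminus\{0\}$, we estimate its Laurent coefficients by $|a_{n}|\,r^{n}\le\frac{1}{2\pi}\int_{0}^{2\pi}|F(re^{i\theta})|\,d\theta$, raise to the power $p=2-\epsilon\in(1,2)$ via H\"older, multiply by $r$, and integrate in $r$; since $\int_{0}^{R}r^{np+1}\,dr$ diverges for $n\le-2$, finiteness of $\|F\|_{L^{p}}$ forces $a_{n}=0$ for all $n\le-2$, so $z=0$ is at most a simple pole of $F$.

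I expect the main obstacle to be the second-order term $z\,\partial^{2}v/\partial z^{2}$: the entire gain comes from the weight $z$, which converts the merely $L^{1}$ datum $h$ into an $L^{2}$ datum for $G=z\,\partial v/\partial z$ and thereby unlocks $W^{1,2}$-regularity and the borderline Sobolev embedding. The two points requiring care are that the identity $\partial g/\partial\bar z=\tfrac14 h$ holds as distributions \emph{across} the origin, which is precisely what Steps 1 and 3 of the preceding lemma provide, and that the Beurling estimate is invoked on a compactly supported localization of $G$ so that the classical singular-integral bounds apply.
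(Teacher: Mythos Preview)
Your argument is correct and follows the same overall strategy as the paper: weight by $z$ to upgrade the borderline datum $h$ to $L^{2}$, deduce $z\,\partial v/\partial z\in L^{q}$ for every finite $q$ via the two--dimensional Sobolev embedding, and then control the three summands by H\"older. The implementation differs slightly. The paper sets $\tilde v:=zv$, computes $\Delta\tilde v=4\,\partial v/\partial\bar z+z\,\Delta v\in L^{2}({\bf D})$, and invokes interior $W^{2,2}$--regularity for the Laplacian to obtain $\tilde v\in W^{2,2}$; this yields $\partial_{z}\tilde v=v+z\,\partial_z v\in L^{q}$ (Sobolev) and $\partial_{z}^{2}\tilde v=2\,\partial_z v+z\,\partial_z^{2}v\in L^{2}$ in one stroke. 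You instead work one derivative down with $G=z\,\partial_z v$, use $\bar\partial G=\tfrac{z}{4}h\in L^{2}$, and appeal to the $L^{2}$--boundedness of the Beurling transform (equivalently, Plancherel) to get $\partial_z G\in L^{2}$; this gives the same conclusions. The two devices are equivalent at the $L^{2}$ level, since the Calder\'on--Zygmund estimate behind $\Delta^{-1}:L^{2}\to W^{2,2}$ is exactly the $L^{2}$ bound for the Beurling operator. For the quadratic term the paper factors $z(\partial_z v)^{2}=(z\,\partial_z v)\cdot\partial_z v$ and pairs $L^{p}$ with $L^{2}$, whereas you factor it as $G^{2}/z$ and pair $L^{q}$ with $1/z\in L^{p}$, $p<2$; either H\"older splitting lands in $L^{2-\epsilon}$. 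Your Laurent--coefficient estimate for the pole order is a fine explicit version of the paper's one--line remark that $F\in L^{1}$ forces the pole to be at most simple.
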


\begin{proof} By Lemma 2.1, both $\frac{\partial v}{\partial z}$ and $\frac{\partial v}{\partial \bar{z}}$ belong to $L^2(\bf{D})$. Then the first summand $ \frac{\partial v}{\partial z}\big(1+\frac{1}{ln|z|}\big) $ in $F(z)$ also lies in $ L^2(\bf{D}) $.
Defining $ \tilde{v}:=zv $, we have
$$ \Delta \tilde{v} = 4\frac{\partial v}{\partial \bar{z}}+z\Delta v.$$
 Since both $\frac{\partial v}{\partial \bar{z}}$ and $z\Delta v=\dfrac{e^{2v}-1}{|z|(ln|z|)^2}$ belong to $L^2(\bf{D}) $, we have $ \Delta \tilde{v} \in L^2(\bf{D})$ and then $ \tilde{v} \in W^{2,2}(\bf{D})$.
 By the Sobolev embedding theorem, we obtain $ \frac{\partial \tilde{v}}{\partial z}=v+z\frac{\partial v}{\partial z} \in L^p(\bf{D})$ for any $p>1$.  Since $v\in L^p(\bf{D})$ as well ,we have $ z\frac{\partial v}{\partial z} \in L^p(\bf{D}) $ for any $p>1$. We now claim that {\it the third summand
  $ z \big(\frac{\partial v}{\partial z}\big)^2 $ in  $F(z)$ belongs to $ L^{2-\epsilon}(\bf{D}) $ for all $0<\epsilon<1$.} In fact, defining $\frac{1}{p}:=\frac{\epsilon}{2}$ and $ \frac{1}{q}:=\frac{2-\epsilon}{2} $, by the H{\" o}lder inequality, we obtain
 $$ \int_{\bf{D}} \left|z\bigg(\frac{\partial v}{\partial z}\bigg)^2\right|^{2-\epsilon}\leq \Bigg(\int_{\bf{D}} \left|z\dfrac{\partial v}{\partial z}\right|^{(2-\epsilon) p}\Bigg)^{1/p} \Bigg( \int_{\bf{D}} \left|\dfrac{\partial v}{\partial z}\right|^{(2-\epsilon)q} \Bigg)^{1/q} < \infty
 $$
As long as the the second summand $z\frac{\partial^2 v}{\partial z^2}$ in $F(z)$ is concerned, since $\frac{\partial ^2 \tilde{v}}{\partial z^2} = 2\frac{\partial v}{\partial z} + z\frac{\partial^2 v}{\partial z^2} \in L^2(\bf{D})$, we have $z\frac{\partial^2 v}{\partial z^2} \in L^2(\bf{D})$.
Therefore, we have proved  $ F(z)\in L^{2-\epsilon}(\bf{D}) $.

Since $F$ is holomorphic and integrable in ${\bf D}^*$,  $z=0$ is at most a simple pole of $F$.
\end{proof}

\noindent {\bf Proof of Lemma 2.2}\quad
We prove by contradiction.
Suppose that $F(z)$ has a simple pole at $z=0$ with residue $-\lambda ^2/4$, where $\lambda \in \mathbb{C}^*$ and $\Re \lambda \ge0$. Take a developing map $f:U^*\to {\Bbb D}$ of the restriction of the hyperbolic metric $ds^2$ to $U^*$.
Moreover,  we have $ \{f,z\}=\frac{1}{2z^2}-\frac{2F(z)}{z}=\frac{1-\lambda ^2}{2z^2}+\frac{d}{z}+\psi$ for some constant $d$ and a holomorphic function $\psi$ in $U$.
Near each small disk lying in $U^*:=U\backslash \{0\}$,   $f$ is the ratio of two linear independent solutions of the following Fuchsian equation
  $ \frac{d^2 y}{dz^2}+\frac{1}{2} \big(\frac{1-\lambda ^2}{2z^2}+\frac{d}{z}+\psi\big)y=0, $
whose two indicial exponents are $ (1+\lambda)/2 $ and $ (1-\lambda)/2 $ with difference $\lambda$.

If $\lambda\notin {\Bbb Z}$, it follows from the Frobenius method \cite[p.39]{Yo87} that there exists a small neighborhood, say $V$, of $z=0$, and another complex coordinate $\xi$ of $V$ centered at $z=0$ such that $f$ has the form of $\xi^\lambda$ in each small disk of $V^*:=V\backslash \{0\}$, where the details of computation is the same with \cite{Feng2017}. Since $f$ takes values in ${\Bbb D}^*$,  $\lambda$ must be positive and  $z=0$ is a cone singularity of $ds^2$ with angle $2\pi \lambda$. Contradiction!

If $\lambda\in {\Bbb Z}_{\not=0}$, then by a combination of the Frobenius method and the fact that $f$ takes values in ${\Bbb D}^*$ and has monodromy in ${\rm PSU(1,\,1)}$, we find that $\lambda$ is a positive integer and
$f$ has form $\xi^\lambda$ in another complex coordinate chart $(V,\,\xi)$ centered at $z=0$. It implies that $z=0$ is also a cone singularity of $ds^2$ of angle $2\pi\lambda$. Contradiction! $\hfill{\Box}$\\

At last we prove\\

\nd {\sc The necessary part of Theorem \ref{thm:conn}}\quad
By the similar argument as \cite[Lemma 2.1]{CWWX15}, there exists a projective function $f:
\fX\backslash {\rm supp}\,\fD\to {\Bbb D}$ with monodromy in ${\rm PSU(1,\,1)}$ such that $ds^2=f^*g_{\rm st}$.
We call $f$ a {\it developing map} of the hyperbolic metric $ds^2$, which is unique up to
a post-composition of a M\" obius transformation in ${\rm PSU(1,\,1)}$.
It follows from the proven part of Lemma \ref{lem:equiv} that $f$ is compatible with $\fD$.

\section{Sufficient part of Theorem 1.2, an example and three questions}
\paragraph{}

To complete the proof of Lemma \ref{lem:equiv}, we need only to show  that Definition \ref{defn:cx} for cone and cusp singularities of a hyperbolic metric $ds^2$ in $U^*=\{0<|z|<1/2\}$ implies
the differential geometric definition of them.
Actually, the argument is similar as in the proof of Lemma 2.2.
If $\theta\notin {\Bbb Z}$, then
by only using the Frobenius method, we find easily
that $z=0$ is a cone singularity of angle $2\pi\theta$ as $\theta>0$, and it is a cusp singularity as $\theta=0$. If $\theta\in {\Bbb Z}_{>1}$, since $|f|<1$, we could rule out the possibility that $f$ may have the logarithmic singularity at $z=0$ and find that $z=0$ is a cone singularity of angle $2\pi\theta_j$. \\

Then we prove\\

\nd{\sc The sufficient part of Theorem \ref{thm:conn}}\quad Suppose that
$f:\fX\backslash {\rm supp}\, \fD\to {\Bbb D}$ is a projective function which is compatible with $\fD$ and has
the monodromy in
${\rm PSU}(1,\,1)$. Then $f^*g_{\rm st}$ is a conformal hyperbolic metric on $\fX\backslash {\rm supp}\, \fD$.
It follows from Lemma \ref{lem:equiv} that the metric $f^*g_{\rm st}$ represents $\fD$.\\

Before giving the details of Example \ref{exam:disc}, we need an equivalent version of Theorem \ref{thm:conn} as follows. \\

\nd {\it There exists a conformal hyperbolic metric $ds^2$ representing an ${\Bbb R}$-divisor $\fD=\sum_{j}\,(\theta_j-1)\,\fP_j$ with $\theta_j\geq 0$ on a Riemann surface $\fX$ if and only if there exists a projective function
$f:\fX\backslash {\rm supp}\, \fD\to {\Bbb H}:=\{w\in {\Bbb C}:\,\Im w>0\}$ such that $f$ is compatible with $\fD$ and
the monodromy of $f$ lies in the holomorphic automorphism group
$${\rm PSL}(2,\,{\Bbb R})=\left\{w\longmapsto\frac{aw+b}{cw+d}:\ a,\ b,\ c,\ d\in \mathbb{R},\ ad-bc=1\right\}$$
 of ${\Bbb H}$.
Moreover, $ds^2$ coincides with the pullback $f^* \widetilde{g_{\rm st}}$ of the standard hyperbolic metric $\widetilde{g_{\rm st}}:=\frac{4|dw|^2}{\big(\Im w\big)^2}$ on ${\Bbb H}$ by $f$. We call $f$ a {\rm developing map} of the metric $ds^2$, which is uniquely determined up to a post-composition with an automorphism of ${\Bbb H}$.}\\

\nd Denote $\omega:=-\sqrt{-1}\,h(z)\, dz=\sum_{j=1}^\infty\,\Big(\frac{-\sqrt{-1}\,a_j}{z-z_j}\Big)\, dz.$
Since $\sum_{n=1}^\infty\, a_n$ is a convergent series of positive numbers,
we observe that  the multi-valued function $\int_0^z\, \omega$ on ${\Bbb D}\backslash\{z_j\}_{j=1}^\infty$
has monodromy in $\{w\mapsto w+t:\, t\in {\Bbb R}\}\subset{\rm PSL}(2,\,{\Bbb R})$ such that its imaginary part  $\Im\bigg(\int_0^z \omega\bigg)$ is single-valued and has a lower bound. Hence,
 there exists a real number $\lambda_0$ such that for all $z\in {\Bbb D}\backslash\{z_j\}_{j=1}^\infty$
\[\lambda_0+\Im\bigg(\int_0^z \omega\bigg)\geq 0.\]
Hence
\[f_\lambda(z)=\sqrt{-1}\lambda+\int_0^z\, \omega,\quad\lambda\in (\lambda_0,\,\infty),\]
is a family of projective functions on ${\Bbb D}\backslash\{z_j\}_{j=1}^\infty$ taking values in ${\Bbb H}$ and having monodromy in $\{w\mapsto w+t:\, t\in {\Bbb R}\}\subset{\rm PSL}(2,\,{\Bbb R})$. We claim that {\it $ds^2_\lambda:=f_\lambda^*\big(\widetilde{g_{\rm st}}\big)$ is a family of hyperbolic metrics representing the divisor $\fD=(h)$}. Actually,  $f_\lambda(z)$ equals $\big(-\sqrt{-1}\,a_j\big)\log\,(z-z_j)$ plus a a multi-valued holomorphic function near $z_j$, so $\{f,\,z\}=\frac{1}{(z-z_j)^2}+\cdots$ there. Hence $z_j$ is a cusp singularity of $ds^2_\lambda$.
Near each zero $w$ of $h(z)$ with multiplicity $\ell$, we have
$\frac{d}{dz}\big(f_\lambda(z)\big)=\big(-\sqrt{-1}\big)h(z)=(z-w)^\ell\, g(z)$, where $g(z)$ is holomorphic at $w$ and $g(w)\not=0$. Hence near $w$, $\{f,\,z\}=\frac{1-(\ell+1)^2}{(z-w)^2}+\cdots$, which implies
that $w$ is a cone singularity of $ds^2_\lambda$ with angle $2\pi(1+\ell)$.

By now, we have proved the statements in the first paragraph of Example \ref{exam:disc}. For the second one, we need the following elementary lemma.

\begin{lemma}
\label{lem:zeros}
Let $a_1,\cdots, a_N$ be $N\geq 2$ positive numbers and $z_1,\cdots, z_N$ distinct complex numbers in the disc
$\{|z|<R\}$, where $R$ is a positive constant. Then, the rational function $\sum_{j=1}^N\,\frac{a_j}{z-z_j}$ has $(N-1)$ zeros {\rm (}counting multiplicities{\rm )} on the disc $\{|z|<R\}$. The meromorphic function $h$ on ${\Bbb C}\backslash \{|z|=1\}$ in Example \ref{exam:disc} has no zero in $\{z\in {\Bbb C}:\, |z|\geq 1\}$.
\end{lemma}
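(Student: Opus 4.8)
The plan is to treat both assertions through a single elementary principle: \emph{every zero of a function of the form $\sum_j \frac{a_j}{z-z_j}$ with all $a_j>0$ is a convex combination of the points $z_j$.} First I would dispose of the finite rational function by clearing denominators. Writing $g(z):=\sum_{j=1}^N\frac{a_j}{z-z_j}=P(z)/\prod_{k=1}^N(z-z_k)$ with $P(z)=\sum_{j=1}^N a_j\prod_{k\neq j}(z-z_k)$, the polynomial $P$ has leading coefficient $\sum_j a_j>0$, hence $\deg P=N-1$ exactly, so $P$ has $N-1$ roots in $\mathbb{C}$ counted with multiplicity. Since $P(z_{j})=a_{j}\prod_{k\neq j}(z_{j}-z_k)\neq 0$, none of the poles $z_j$ is a root, so the zeros of $g$ coincide with those of $P$ and number $N-1$ with multiplicity. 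It then only remains to place them inside $\{|z|<R\}$.

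For the localization I would establish the convex-hull principle. If $z_0$ is a zero of $g$ then $z_0\neq z_j$ for all $j$, and taking complex conjugates of $g(z_0)=0$ gives
\[
0=\overline{g(z_0)}=\sum_j a_j\,\frac{z_0-z_j}{|z_0-z_j|^2}=\Big(\sum_j w_j\Big)z_0-\sum_j w_jz_j,\qquad w_j:=\frac{a_j}{|z_0-z_j|^2}>0,
\]
so that $z_0=\big(\sum_j w_jz_j\big)/\big(\sum_j w_j\big)$ is a convex combination of the $z_j$. Applied to the finite sum, where all $z_j$ lie in the convex open disc $\{|z|<R\}$, we get $|z_0|\le\sum_j\frac{w_j}{\sum_k w_k}\,|z_j|<R$; this places all $N-1$ zeros in $\{|z|<R\}$ and completes the first assertion.

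For the infinite function $h(z)=\sum_{j=1}^\infty\frac{a_j}{z-z_j}$ I would run the same identity, the one new point being to secure absolute convergence of the weighted sums. Fix $z_0$ with $|z_0|>1$; since every $|z_j|<1$ we have the uniform separation $|z_0-z_j|\ge|z_0|-1>0$, whence $w_j\le a_j/(|z_0|-1)^2$ and both $\sum_j w_j$ and $\sum_j w_jz_j$ converge because $\sum_j a_j<\infty$. Thus $z_0$ is again an (infinite) convex combination of points of $\mathbb{D}$, and $|z_0|\le\big(\sum_j w_j|z_j|\big)/\big(\sum_j w_j\big)<1$ since $\sum_j w_j(1-|z_j|)>0$, contradicting $|z_0|>1$. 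Hence $h$ has no zero with $|z|>1$, which is precisely the part of $\{|z|\ge 1\}$ lying in the domain $\mathbb{C}\setminus\{|z|=1\}$ on which $h$ is defined.

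I expect the only genuine obstacle to be this infinite case. The convex-combination identity is purely formal, and its validity rests on $\sum_j w_j<\infty$, which in turn needs the strict separation $|z_0|-1>0$. This is exactly why the circle $\{|z|=1\}$ must be excluded: there the $z_j$ may cluster against $z_0$, and the series defining $h$ (and likewise $\sum_j w_j$) can diverge, so the boundary falls genuinely outside the scope of the statement rather than being an omission to be patched.
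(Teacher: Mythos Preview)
Your proof is correct and takes a genuinely different route from the paper's. The paper argues half-plane by half-plane: for any tangent line to $\{|z|=R\}$, say $\Im z=-R$, and any $\xi$ with $\Im\xi\le -R$, one has $\Im(\xi-z_j)<0$ for every $j$, hence $\Im\frac{a_j}{\xi-z_j}>0$ and the sum cannot vanish; sweeping over all tangent lines traps the zeros inside $\{|z|<R\}$. You instead run the Gauss--Lucas mechanism directly, writing any zero $z_0$ as a convex combination of the poles with weights $a_j/|z_0-z_j|^2$. The two arguments are close cousins---convexity drives both---but yours is a bit sharper (it places the zeros in the convex hull of the $z_j$, not merely the disc) and handles the infinite sum $h$ with the identical computation once $\sum w_j<\infty$ is secured, whereas the paper's approach is marginally more elementary (only the sign of an imaginary part, no conjugation trick). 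One small remark: the paper also notes that $h$ extends holomorphically to points of $\partial\mathbb{D}$ that are not limit points of $\{z_j\}$; your convex-combination identity would cover those points too, since such a $z_0$ is still uniformly separated from all $z_j$ and the conclusion $|z_0|<1$ contradicts $|z_0|=1$. So the boundary case you set aside actually comes for free with your method.
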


\begin{proof} Taking a complex number $\xi$ such that $\Im \xi\leq -R$, we find
\[\Im\,(\xi-z_j)<0\quad {\rm and}\quad \Im\,\frac{a_j}{\xi-z_j}>0,\]
which implies $\sum_{j=1}^N\,\frac{a_j}{\xi-z_j}\not=0$. Observing that $\Im\, z=-R$ is a tangent line to
 the circle $\{|z|=R\}$, we could prove the first statement by arguing on each half plane defined by each tangent line to the circle and disjoint from the disc $\{|z|<R\}$. The second one follows from the similar argument. We also note that $h$ extends holomorphically to each point on the circle $\partial{\Bbb D}=\{|z|=1\}$, which is not a limit point of $\{z_j\}$.
\end{proof}

\nd Then we prove that {\it $h_0(z)=:\sum_{j=1}^\infty\, \frac{a_j}{z-z_j}$ has the same number of zeros as
$f_N(z):=\sum_{|z_j|\leq r_N}\, \frac{a_j}{z-z_j}$
on the disc $\{|z|<r_N:=1-\frac{1}{2N}\}$ when $N$ is sufficiently large, where $a_j=\frac{1}{2j^3(2j+1)}$ and $z_j=1-\frac{1}{2j-1}$}. At first we show that
on the circle $\{|z|=r_N\}$ there holds $|f_N(z)|>|g_N(z)|$ when $N$ is sufficiently large, where
\[ g_N(z):=h_0(z)-f_N(z)=\sum_{|z_j|>r_N}\,\frac{a_j}{z-z_j}.\]
In fact, on the circle $\{|z|=r_N\}$, we have
\[|g_N(z)|\leq \sum_{|z_j|>r_N}\,\frac{a_j}{|z_j|-r_N}\leq 2N(2N+1) \sum_{|z_j|>r_N}\,a_j\leq \sum_{j>N}\,\frac{1}{j^2}.\]
Moreover, denoting by $z= i r_N e^{i\theta}$ a point $z$ on this circle, by computation, we have
\[f_N(z)=e^{-i\theta}\,\sum_{|z_j|<r_N}\, \frac{a_j}{i r_N-z_j e^{-i\theta}}
=e^{-i\theta}\,\sum_{|z_j|<r_N}\, \frac{a_j\big(-i r_N-z_j e^{i\theta}\big)}{|i r_N-z_j e^{-i\theta}|^2},\]
and recalling $z_1=0$, we obtain
\[|f_N(z)|=|e^{i\theta}f_N(z)|\geq \left|\Im\, \big(e^{i\theta}f_N(z)\big)\right|
=\sum_{|z_j|<r_N}\,\frac{a_j(r_N+z_j \sin\,\theta)}{|i r_N-z_j e^{-i\theta}|^2}\geq \frac{a_1}{r_N}\geq a_1 \]
and prove the inequality $|f_N(z)|>|g_N(z)|$ on the circle.
Since both $h_0$ and $f_N$ have $N$ simple poles on the disc $\{|z|<r_N\}$ and
$|h_0(z)-g_N(z)|=|f_N(z)|>|-g_N(z)|$ on the circle $\{|z|=r_N\}$, by the Rouch\' e theorem,
$h_0(z)$ has the same number of zeros as $f_N(z)$ on the disc  $\{|z|<r_N\}$. By Lemma \ref{lem:zeros}, counting multiplicities, we find that $f_N(z)$ has $(N-1)$ zeros on the disc. Therefore, $h_0(z)$ has infinitely many zeros on ${\Bbb D}$. By now we have completed the exposition of Example \ref{exam:disc}. \\

At last, we propose the following two questions.\\

%{\bf Question 3.1.} Using the notions in Theorem 1.2,
%we ask whether the image of $f:{\frak X}\backslash {\rm supp}\, {\frak D} \to {\Bbb D}$
%is an open dense subset of ${\Bbb D}$.  If so, we could remove the condition in Theorem 1.2 that $f$ has monodromy in ${\rm PSU(1,\,1)}$.
%The Uniformization Theory \cite[Chapter IV]{FK92} shows that the answer to the question is positive as $\theta_j\in \{0,1/2, 1/3,\cdots\}$ when the image of $f$ only misses a discrete subset of ${\Bbb D}$. \\

{\bf Question 3.1.} Use the notions in Theorem 1.1 and assume that $\theta_j$'s are non-negative rational numbers. {\it What is the necessary and sufficient condition for $D=\sum_{j=1}^n\, (\theta_j-1)P_j$ under which the monodromy group of the developing map $f$ of the hyperbolic metric $ds^2$ representing $D$ on a compact Riemann surface $X$ is discrete in ${\rm PSU(1,\,1)}$?} It is the case when
$\theta_j\in \{0,1/2, 1/3,\cdots\}$ by the Uniformization Theory. Also a conceptual necessary and sufficient condition was given in \cite[Theorem 3.29]{Beuk2007} for the case of 3 singularities on the Riemann sphere, which has yet to be expressed in terms of $\theta_1,\, \theta_2$ and $\theta_3$. \\

{\bf Question 3.2.}  It is interesting for us to investigate the existence and the uniqueness of hyperbolic metrics with isolated singularities on noncompact Riemann surfaces, which seems to be an open problem to the best of our knowledge. We will give a partial answer to this problem in \cite{Feng2019}.\\

{\bf Question 3.3.} Does $h(z)=\sum_{j=1}^\infty\, \frac{a_j}{z-z_j}$ in Example \ref{exam:disc} always have infinitely many zeros on ${\Bbb D}$? Note that $h$ never vanishes outside ${\Bbb D}$. 
%If the answer were positive, how the multiplicities of the zeros of $h(z)$ would depend on the set $\{a_j\}$ of residues  under the variations of the set $\{z_j\}$ of simple poles?

\section{Fundings}

The first author is supported in part by China Scholarship Council, the third author by
 ERC ALKAGE  and the last author by the National Natural Science Foundation of China (grant no. 11571330) and
the Fundamental Research Funds for the Central
Universities.

   %\paragraph{Acknowledgments}
   %I wish to express my gratitude to Professor XiuXiong Chen for his instructions and encouragements. And my %gratitude to Professor Bin Xu, he gives me a great inspiration in studying and solving the problem. Also I %want to thank Yue Wu for her helping in proving Lemma 2 in this paper.

\end{document}